\documentclass[12pt]{article}

\usepackage[margin=1in]{geometry}
\usepackage{times}
\usepackage{amssymb}
\usepackage{amsmath}
\usepackage{graphicx}
\usepackage{amsthm}
\usepackage{enumitem}
\usepackage{hyperref}

\newtheorem{theorem}{Theorem}
\newtheorem{corollary}[theorem]{Corollary}
\newtheorem{proposition}[theorem]{Proposition}
\newtheorem{definition}[theorem]{Definition}
\newtheorem{remark}[theorem]{Remark}

\usepackage[numbers,sort&compress]{natbib}

\title{Multicriticality and Scaling:\\ Mellin Spectral Theory, and the\\
Decoupling of Geometric and Spectral Exponents}

\author{
Laurence A.\ Jacobs\textsuperscript{1,2,*},\quad
Alejandro Frank\textsuperscript{1,3,$\dagger$} \\[6pt]
{\small \textsuperscript{1}Centro de Ciencias de la Complejidad,
Universidad Nacional Aut\'onoma de M\'exico, Mexico City, Mexico} \\[3pt]
{\small \textsuperscript{2}Center for Molecular Cardiology, University of Zurich, Switzerland} \\[3pt]
{\small \textsuperscript{3}Instituto de Ciencias Nucleares,
  Universidad Nacional Aut\'onoma de M\'exico,
  Mexico City, Mexico}\\[3pt]
{\small \textsuperscript{$\dagger$}Member of El Colegio Nacional} \\[6pt]
{\small \textsuperscript{*}Correspondence: laurence.jacobs@uzh.ch}
}

\date{May 4, 2026}

\begin{document}
\maketitle

\begin{abstract}
We develop a spectral theory of scale-invariant operators on the multiplicative half-line $(\mathbb{R}_+, dx/x)$. A symmetric kernel $M(x,y)$ satisfying $M(kx,ky) = k^{-a} M(x,y)$ necessarily factorizes as $(xy)^{-a/2}\, F(x/y)$, where the shape function $F$ depends only on the ratio of its arguments. The Mellin transform diagonalizes such operators: the generalized eigenfunctions are $\psi_\omega(x) = x^{-a/2 + i\omega}$, and the eigenvalues are the Mellin multiplier $\widetilde{F}(\omega)$.

This structure reveals a fundamental decoupling of two exponents. The \emph{geometric exponent}~$a$, carried by the power-law envelope $(xy)^{-a/2}$, governs the matrix scaling under dilation. The \emph{spectral exponent}~$b$, measured from the eigenvalue decay of the finite-dimensional truncation, is an effective quantity determined by the shape of $\widetilde{F}(\omega)$. For the explicit kernel $F(t) = c\,\rho^{|\ln t|}$, the Mellin multiplier is a Lorentzian of width $\sigma = -\ln\rho$, not a power law---so $b$ is generically distinct from $a$.

This decoupling provides a precise mathematical characterization of multicriticality: the equality $a = b$ corresponds to a simple critical fixed point of the Renormalization Group, while $a \neq b$ signals the presence of multiple independent scaling dimensions. We prove that the discrete self-similarity condition forces eigenvector collapse on the lattice, motivating the continuum formulation. Finite-size corrections from lattice sampling are quantified numerically.
\end{abstract}

\medskip
\noindent\textbf{Keywords:} scale invariance, Mellin transform, multicriticality, renormalization group, spectral theory, kernel factorization

\section{Introduction}

Scale invariance is a hallmark of systems at criticality~\cite{Stanley1971,Fisher1998,Goldenfeld1992}. In the Renormalization Group (RG) framework~\cite{Wilson1971,Kadanoff1966,WilsonKogut1974}, a system at a critical fixed point is self-similar: coarse-graining by a factor $k$---integrating out short-wavelength degrees of freedom and rescaling---leaves the essential structure unchanged up to a rescaling of coupling constants. When multiple independent scaling dimensions coexist, the system is \emph{multicritical}~\cite{Wegner1972,ZinnJustin2002}.

A natural spectral signature of criticality is a power-law distribution of eigenvalues in a correlation or coupling matrix. Such spectra arise in financial time series~\cite{Plerou1999,Laloux1999}, random matrix theory~\cite{Mehta2004,MarchenkoPastur1967}, biological networks~\cite{MoraBialek2011}, and collective animal behavior~\cite{Cavagna2010}. A companion study~\cite{msi2026} develops the physical framework for detecting criticality in multi-variable systems through matrix scale invariance.

This paper provides the mathematical foundations. We ask: what is the structure of a symmetric operator whose kernel satisfies the scaling relation $M(kx,ky) = k^{-a} M(x,y)$? We show that such operators are completely characterized by a kernel factorization and diagonalized by the Mellin transform, yielding generalized eigenfunctions $x^{-a/2+i\omega}$ and a Mellin multiplier that plays the role of a continuous eigenvalue spectrum. This framework reveals a decoupling between the geometric exponent $a$ (from the kernel envelope) and any effective spectral exponent $b$ (from the eigenvalue decay of finite truncations), providing a rigorous foundation for the notion of multicriticality as the condition $a \neq b$.

We also prove a discrete collapse theorem (Theorem~\ref{thm:collapse}) showing that the natural self-similarity condition on the integer lattice forces all eigenvectors to be proportional, admitting at most rank-one structure. This necessitates the continuum formulation and clarifies the status of finite-dimensional matrix computations as lattice samplings of the underlying continuum operator.

\section{The Continuum Framework}\label{sec:continuum}

\subsection{Hilbert space and dilations}

The natural Hilbert space for dilation-covariant operators is
\begin{equation}\label{eq:hilbert}
\mathcal{H} = L^2(\mathbb{R}_+,\, dx/x)\,,
\end{equation}
the space of square-integrable functions on the positive half-line with respect to the Haar measure of the multiplicative group $(\mathbb{R}_+, \times)$~\cite{ReedSimon1978,Folland1995}. The inner product is
\[
\langle f, g \rangle = \int_0^\infty \overline{f(x)}\, g(x)\, \frac{dx}{x}\,.
\]

\begin{definition}[Dilation operator]\label{def:dilation}
For $k > 0$, the \emph{dilation operator} $D_k: \mathcal{H} \to \mathcal{H}$ is defined by
\begin{equation}\label{eq:dilation}
(D_k f)(x) = f(kx)\,.
\end{equation}
\end{definition}

\begin{proposition}\label{prop:unitary}
The operator $D_k$ is unitary on $\mathcal{H}$, and the family $\{D_k\}_{k>0}$ forms a strongly continuous one-parameter unitary group with self-adjoint generator $A = -i\, x\, d/dx$, so that $D_k = e^{iA\ln k}$.
\end{proposition}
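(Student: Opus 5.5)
The natural route is to pass to logarithmic coordinates, where everything becomes the standard theory of translations on $L^2(\mathbb{R})$. Define $U:\mathcal{H}\to L^2(\mathbb{R},du)$ by $(Uf)(u)=f(e^u)$. Since $x=e^u$ gives $dx/x=du$, $U$ is unitary. Under this map $D_k$ becomes translation: with $t=\ln k$ we have $(UD_kU^{-1}g)(u)=g(u+t)$. The proof then runs in the following steps.

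First, unitarity. The change of variables $y=kx$ preserves the Haar measure, since $dy/y=dx/x$. Hence $\|D_kf\|=\|f\|$. Moreover $D_kD_{k'}=D_{kk'}$ and $D_1=I$, so $D_{1/k}$ is a two-sided inverse of $D_k$, and $D_k$ is a surjective isometry, i.e.\ unitary. The group law also shows that $t\mapsto D_{e^t}$ is a one-parameter group.

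Second, strong continuity. I would check this on the dense subspace $C_c(\mathbb{R}_+)$. For such $f$, uniform continuity and compact support give $\|D_kf-f\|\to0$ as $k\to1$, by dominated convergence on a fixed compact set. An $\varepsilon/3$ argument, using $\|D_k\|=1$, extends this to all of $\mathcal{H}$. Equivalently, one can cite the known strong continuity of translations on $L^2(\mathbb{R})$ and transport it through $U$.

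Third, the generator. Stone's theorem gives a unique self-adjoint $A$ with $D_{e^t}=e^{itA}$, and its domain is the set of $f$ for which $\lim_{t\to0}(D_{e^t}f-f)/(it)$ exists. To identify $A$, I would conjugate by $U$ and then by the Fourier transform, which is exactly the Mellin transform $\widetilde f(\omega)=\int f(x)x^{-i\omega}\,dx/x$ up to normalization. In that representation translation by $t$ is multiplication by $e^{i\omega t}$. So $A$ is unitarily equivalent to multiplication by $\omega$ on its maximal domain, which is self-adjoint. Back in $u$-space this is $-i\,d/du$ on $H^1(\mathbb{R})$. Since $d/du=x\,d/dx$ under $x=e^u$, $A=-i\,x\,d/dx$ with domain $\{f\in\mathcal{H}: x f'\in\mathcal{H}\}$, with derivatives taken in the distributional sense. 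On smooth compactly supported $f$ one verifies directly that $\frac{d}{dt}f(e^tx)|_{t=0}=xf'(x)$, hence $iA f=x f'$, consistent with $D_k=e^{iA\ln k}$.

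The main obstacle is the domain issue, namely that $-i\,x\,d/dx$ is genuinely self-adjoint and not merely symmetric. The Fourier/Mellin reduction handles this cleanly. Alternatively, one can note that $C_c^\infty(\mathbb{R}_+)$ is a dense invariant subspace of smooth vectors for the group, so by Nelson's core argument, or by the standard core theorem in Stone's theorem, $A$ is essentially self-adjoint there and its closure is the generator.
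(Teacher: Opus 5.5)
Your proof is correct and follows the same route as the paper: the change of variables $u=kx$, which preserves $dx/x$, gives norm preservation, the group law $D_kD_\ell=D_{k\ell}$ gives the one-parameter group, and strong continuity and the generator are the standard facts that the paper simply cites from Reed--Simon. You go further than the paper by actually proving those facts: surjectivity via $D_{1/k}$, strong continuity by density of $C_c(\mathbb{R}_+)$, and the generator by passing to $x=e^u$, where $D_k$ becomes translation on $L^2(\mathbb{R})$, and then applying the Fourier transform (i.e.\ the Mellin transform), which identifies $A=-i\,x\,d/dx$ as self-adjoint on $\{f\in\mathcal{H}: xf'\in\mathcal{H}\}$ with signs consistent with $D_k=e^{iA\ln k}$.
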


\begin{proof}
Unitarity follows from the substitution $u = kx$:
\[
\|D_k f\|^2 = \int_0^\infty |f(kx)|^2\, \frac{dx}{x} = \int_0^\infty |f(u)|^2\, \frac{du}{u} = \|f\|^2\,.
\]
The group property $D_k D_\ell = D_{k\ell}$ is immediate. Strong continuity and the identification of the generator are standard~\cite{ReedSimon1978}. The dilation group plays a central role in the theory of self-similar stochastic processes~\cite{EmbrechtsMaejima2002}.
\end{proof}

\subsection{Scale-invariant integral operators}

\begin{definition}[Scale-invariant kernel]\label{def:scaleinv}
A kernel $M: \mathbb{R}_+ \times \mathbb{R}_+ \to \mathbb{R}$ defines a \emph{scale-invariant operator of order $a$} if
\begin{equation}\label{eq:scaleinv}
M(kx,ky) = k^{-a}\, M(x,y) \quad \forall\, k > 0,\; x,y \in \mathbb{R}_+\,,
\end{equation}
where $a > 0$ is the \emph{geometric scaling exponent}. The associated integral operator $\mathcal{M}$ acts on $\mathcal{H}$ by
\[
(\mathcal{M} f)(x) = \int_0^\infty M(x,y)\, f(y)\, \frac{dy}{y}\,.
\]
\end{definition}

\section{Kernel Factorization}\label{sec:factorization}

\begin{theorem}[Kernel factorization]\label{thm:factorization}
A symmetric kernel $M(x,y) = M(y,x)$ on $\mathbb{R}_+ \times \mathbb{R}_+$ satisfies the scale-invariance condition~\eqref{eq:scaleinv} if and only if it has the form
\begin{equation}\label{eq:factored}
M(x,y) = (xy)^{-a/2}\, F\!\left(\frac{x}{y}\right)\,,
\end{equation}
where $F: \mathbb{R}_+ \to \mathbb{R}$ is a symmetric shape function satisfying $F(t) = F(1/t)$.
\end{theorem}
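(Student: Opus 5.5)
Both implications are direct computations. The reverse direction is a substitution check. The forward direction uses a specific choice of the dilation factor $k$ that reduces $M$ to a function of the single ratio $x/y$.

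For the reverse direction, suppose $M(x,y) = (xy)^{-a/2} F(x/y)$ with $F(t)=F(1/t)$. Substituting gives
\[
M(kx,ky) = (k^2xy)^{-a/2}F(kx/ky) = k^{-a}(xy)^{-a/2}F(x/y) = k^{-a}M(x,y),
\]
so~\eqref{eq:scaleinv} holds. Symmetry follows from $M(y,x) = (xy)^{-a/2}F(y/x) = (xy)^{-a/2}F(x/y) = M(x,y)$, using the symmetry of $F$.

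For the forward direction, assume $M$ is symmetric and satisfies~\eqref{eq:scaleinv}. Define the shape function by restricting $M$ to the curve $xy=1$:
\[
F(t) := M\bigl(t^{1/2}, t^{-1/2}\bigr), \qquad t>0 .
\]
Given arbitrary $x,y>0$, choose $k = (xy)^{-1/2}$. Then $kx = (x/y)^{1/2}$ and $ky = (y/x)^{1/2}$, so~\eqref{eq:scaleinv} yields
\[
F(x/y) = M(kx,ky) = (xy)^{a/2} M(x,y),
\]
which is exactly~\eqref{eq:factored}. The symmetry $F(1/t) = M(t^{-1/2},t^{1/2}) = M(t^{1/2},t^{-1/2}) = F(t)$ then follows directly from $M(x,y)=M(y,x)$. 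Real-valuedness of $F$ is inherited from $M$.

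The only step needing care is the choice of normalization $k=(xy)^{-1/2}$. It must map every pair $(x,y)$ onto the transversal $\{xy=1\}$ of the dilation orbits, and it must do so uniquely, so that $F$ is well defined. The map $(x,y)\mapsto(xy,\,x/y)$ is a bijection of $\mathbb{R}_+^2$, so each dilation orbit meets the transversal exactly once. No regularity of $M$ is needed beyond the pointwise identity, because~\eqref{eq:scaleinv} is assumed for every $k>0$ and every $x,y$, not merely almost everywhere.
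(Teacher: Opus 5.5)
Your proof is correct and uses the same strategy as the paper: pick a specific $k$ to reduce $M$ to a function of $x/y$, then use symmetry. The only difference is the choice of $k$. The paper takes $k=1/y$ (the transversal $\{y=1\}$), which gives $G(t)=M(t,1)$ with the twisted relation $G(t)=t^{-a}G(1/t)$, and then symmetrizes via $F(t)=t^{a/2}G(t)$; your swap-invariant transversal $\{xy=1\}$ produces the symmetric envelope and $F(t)=F(1/t)$ in a single step.
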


\begin{proof}
\emph{Sufficiency.} If $M$ has the form~\eqref{eq:factored}, then
\[
M(kx,ky) = (kx \cdot ky)^{-a/2}\, F\!\left(\frac{kx}{ky}\right) = k^{-a}(xy)^{-a/2}\, F\!\left(\frac{x}{y}\right) = k^{-a}\, M(x,y)\,.
\]

\emph{Necessity.} Suppose $M(kx,ky) = k^{-a} M(x,y)$ for all $k > 0$. Setting $k = 1/y$ gives $M(x/y, 1) = y^a M(x,y)$, hence
\begin{equation}\label{eq:Gdef}
M(x,y) = y^{-a}\, G(x/y)\,,
\end{equation}
where $G(t) \equiv M(t,1)$. Symmetry of $M$ requires $y^{-a} G(x/y) = x^{-a} G(y/x)$, i.e.,
\begin{equation}\label{eq:Gsym}
G(t) = t^{-a}\, G(1/t)\,.
\end{equation}
Define $F(t) = t^{a/2}\, G(t)$. Then $F(1/t) = t^{-a/2}\, G(1/t) = t^{-a/2} \cdot t^a \, G(t) = t^{a/2}\, G(t) = F(t)$, establishing the symmetry $F(t) = F(1/t)$. Substituting back into~\eqref{eq:Gdef}:
\[
M(x,y) = y^{-a}\, (x/y)^{-a/2}\, F(x/y) = x^{-a/2}\, y^{-a/2}\, F(x/y) = (xy)^{-a/2}\, F(x/y)\,. \qedhere
\]
\end{proof}

\begin{remark}\label{rem:separation}
The factorization~\eqref{eq:factored} separates the kernel into two components with distinct roles:
\begin{itemize}[nosep]
\item The \emph{power-law envelope} $(xy)^{-a/2}$ carries the geometric exponent $a$ and governs the overall scaling of $M$ under dilations.
\item The \emph{shape function} $F(x/y)$ depends only on the ratio of its arguments and determines the correlation structure in log-space.
\end{itemize}
All scale-invariant kernels of order $a$ share the same envelope; they differ only in their shape functions.
\end{remark}

\section{Mellin Diagonalization}\label{sec:mellin}

\subsection{The Mellin transform}

The \emph{Mellin transform}~\cite{Titchmarsh1986,ReedSimon1978} is the Fourier transform on the multiplicative group $(\mathbb{R}_+, \times)$. For $f \in \mathcal{H}$, it is defined by
\begin{equation}\label{eq:mellindef}
\hat{f}(\omega) = \int_0^\infty f(x)\, x^{-i\omega}\, \frac{dx}{x}\,, \quad \omega \in \mathbb{R}\,,
\end{equation}
with inverse
\begin{equation}\label{eq:mellininv}
f(x) = \frac{1}{2\pi} \int_{-\infty}^\infty \hat{f}(\omega)\, x^{i\omega}\, d\omega\,.
\end{equation}
The Mellin transform is an isometric isomorphism $\mathcal{H} \to L^2(\mathbb{R}, d\omega/2\pi)$ and diagonalizes dilations: $\widehat{D_k f}(\omega) = k^{i\omega}\, \hat{f}(\omega)$.

\subsection{Diagonalization of dilation-invariant operators}

The factorization of Theorem~\ref{thm:factorization} motivates the decomposition $\mathcal{M} = \Pi_a \,\mathcal{K}\, \Pi_a$, where $\Pi_a$ is the multiplication operator $(\Pi_a f)(x) = x^{-a/2} f(x)$ and $\mathcal{K}$ is the integral operator with dilation-invariant kernel $K(x,y) = F(x/y)$.

\begin{theorem}[Mellin diagonalization]\label{thm:mellin}
Let $\mathcal{K}$ be a bounded integral operator on $\mathcal{H}$ with dilation-invariant kernel $K(x,y) = F(x/y)$. Then $\mathcal{K}$ is diagonalized by the Mellin transform:
\begin{equation}\label{eq:mellindiag}
\widehat{\mathcal{K} f}(\omega) = \widetilde{F}(\omega)\, \hat{f}(\omega)\,,
\end{equation}
where the \emph{Mellin multiplier} is
\begin{equation}\label{eq:multiplier}
\widetilde{F}(\omega) = \int_0^\infty F(t)\, t^{-i\omega}\, \frac{dt}{t}\,.
\end{equation}
The generalized eigenfunctions of $\mathcal{K}$ are the \emph{Mellin modes} $\phi_\omega(x) = x^{i\omega}$, with eigenvalue $\widetilde{F}(\omega)$.
\end{theorem}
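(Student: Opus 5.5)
The plan is to reduce the statement to the convolution theorem on the multiplicative group, using Fubini plus the dilation invariance of the Haar measure $dx/x$. I would first prove the identity for a dense class and then extend it to all of $\mathcal{H}$ by continuity. The ingredients are boundedness of $\mathcal{K}$ (assumed) and the Mellin--Plancherel isometry $\mathcal{H}\to L^2(\mathbb{R},d\omega/2\pi)$ recalled above.

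\emph{Step 1 (convolution structure).} Since $K(x,y)=F(x/y)$, we have $(\mathcal{K}f)(x)=\int_0^\infty F(x/y)f(y)\,dy/y$. This is multiplicative convolution $F\star f$. Under $x=e^u$, $y=e^v$ it becomes the ordinary convolution on $\mathbb{R}$ of $F\circ\exp$ with $f\circ\exp$, and the Mellin transform becomes the Fourier transform.

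\emph{Step 2 (the identity on a dense class).} Take $f$ in a dense class, say $f$ continuous with compact support in $(0,\infty)$, and assume first $F\in L^1(\mathbb{R}_+,dt/t)$ so that $\widetilde{F}$ is defined by an absolutely convergent integral. Compute
\[
\widehat{\mathcal{K}f}(\omega)=\int_0^\infty\!\!\int_0^\infty F(x/y)f(y)\,x^{-i\omega}\,\frac{dy}{y}\frac{dx}{x}.
\]
Substitute $x=ty$ for fixed $y$, which leaves $dx/x=dt/t$ invariant, and factor $x^{-i\omega}=t^{-i\omega}y^{-i\omega}$. Fubini, justified by absolute integrability of $|F(t)||f(y)|$ against $dt\,dy/(ty)$, then gives $\widetilde{F}(\omega)\hat f(\omega)$.

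\emph{Step 3 (extension to $\mathcal{H}$).} Both sides of \eqref{eq:mellindiag} are continuous in $f\in\mathcal{H}$. The left side is continuous because $\mathcal{K}$ is bounded and the Mellin transform is an isometry. The right side is continuous because $\widetilde F$ is bounded, $|\widetilde F|\le\|F\|_{L^1(dt/t)}$. So the identity extends from the dense class to all of $\mathcal{H}$.

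\emph{Main obstacle.} The hypothesis only says that $\mathcal{K}$ is bounded, not that $F\in L^1(dt/t)$, so $\widetilde F$ may need to be understood distributionally. For this general case I would use dilation covariance. Since $K(kx,ky)=K(x,y)$ and $dy/y$ is dilation invariant, $\mathcal{K}D_k=D_k\mathcal{K}$. By Proposition~\ref{prop:unitary} and the relation $\widehat{D_kf}=k^{i\omega}\hat f$, the conjugate $\widehat{\mathcal{K}}$ is a bounded operator on $L^2(\mathbb{R})$ that commutes with all multiplications by $e^{i\omega\ln k}$. Such operators are exactly multiplication operators by some $m\in L^\infty$. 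It then remains to identify $m=\widetilde F$. I would do this by applying $\mathcal{K}$ to Gaussians in $\ln x$ and comparing with the tempered-distribution Fourier transform of $F\circ\exp$.

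\emph{Eigenfunctions.} The formal computation $\int_0^\infty F(x/y)y^{i\omega}\,dy/y=x^{i\omega}\widetilde F(\omega)$ follows from the same substitution $y=x/t$. Note that $\phi_\omega\notin\mathcal{H}$, so "eigenfunction" is meant in the generalized (rigged Hilbert space) sense. The statement is then the same as saying that $\mathcal{K}$ is unitarily equivalent, via the Mellin transform, to multiplication by $\widetilde F$.
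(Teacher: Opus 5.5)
Your proof is correct. Its core is the same convolution-theorem calculation the paper uses (the substitution $t=x/y$ plus an interchange of integrals), but you organize it differently and more rigorously.

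\textbf{What the paper does.} It substitutes the inverse Mellin representation of $f$ into $(\mathcal{K}f)(x)=\int_0^\infty F(t)f(x/t)\,dt/t$ and swaps the $t$- and $\omega$-integrals without comment. It then reads off $\widetilde{F}$ by comparison with the inversion formula. That swap tacitly needs something like $F\in L^1(dt/t)$ and $\hat f\in L^1(\mathbb{R})$, and neither follows from boundedness of $\mathcal{K}$.

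\textbf{What you do differently.} You work on the analysis side. You take the forward transform of $\mathcal{K}f$ for $f\in C_c((0,\infty))$, where Fubini is honestly justified once $F\in L^1(dt/t)$. You then extend by density using $\|\widetilde F\|_\infty\le\|F\|_{L^1(dt/t)}$. For a merely bounded $\mathcal{K}$ you replace computation by structure. Since $\mathcal{K}D_k=D_k\mathcal{K}$, the Mellin conjugate commutes with all multiplications by $k^{i\omega}$, so it is multiplication by some $m\in L^\infty$. The identification $m=\widetilde F$ then has to be read distributionally. For locally integrable $F$ it is essentially automatic: the convolution kernel $F\circ\exp$ must agree, as a distribution, with the inverse Fourier transform of $m$, so it is tempered with Fourier transform $m$. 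Your Gaussian-testing sketch also works, because dividing by a nowhere-vanishing Gaussian is legitimate against $C_c^\infty$ test functions.

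\textbf{What each route buys.} The paper's synthesis-side route makes the generalized-eigenfunction claim transparent: the inversion formula literally displays $\mathcal{K}f$ as a superposition of $\widetilde F(\omega)\phi_\omega$. Your route gives an actual proof under explicit hypotheses. It also isolates a useful point: Mellin diagonalizability follows from dilation covariance alone, and the integral formula for $\widetilde F$ needs a distributional reading exactly when $F\notin L^1(dt/t)$.
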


\begin{proof}
Using the substitution $t = x/y$,
\begin{align}
(\mathcal{K} f)(x) &= \int_0^\infty F(x/y)\, f(y)\, \frac{dy}{y} = \int_0^\infty F(t)\, f(x/t)\, \frac{dt}{t}\,. \label{eq:convolution}
\end{align}
Expanding $f$ via the Mellin inversion formula~\eqref{eq:mellininv}:
\begin{align}
(\mathcal{K} f)(x) &= \int_0^\infty F(t) \left[\frac{1}{2\pi}\int_{-\infty}^\infty \hat{f}(\omega)\,(x/t)^{i\omega}\, d\omega\right] \frac{dt}{t} \notag\\
&= \frac{1}{2\pi}\int_{-\infty}^\infty \hat{f}(\omega)\, x^{i\omega} \underbrace{\left[\int_0^\infty F(t)\, t^{-i\omega}\, \frac{dt}{t}\right]}_{\widetilde{F}(\omega)} d\omega\,. \notag
\end{align}
Comparing with~\eqref{eq:mellininv}, $\mathcal{K}$ acts as multiplication by $\widetilde{F}(\omega)$ in the Mellin domain, establishing~\eqref{eq:mellindiag}. Setting $f = \phi_\omega$ (distributionally) gives $\mathcal{K}\phi_\omega = \widetilde{F}(\omega)\,\phi_\omega$.
\end{proof}

\begin{remark}
Equation~\eqref{eq:convolution} identifies $\mathcal{K}$ as a \emph{Mellin convolution operator}: the kernel $F(x/y)$ depends only on the multiplicative ratio, so $\mathcal{K}$ acts as convolution on the multiplicative group. The Mellin transform converts this convolution to pointwise multiplication, exactly as the Fourier transform diagonalizes translation-invariant operators on $(\mathbb{R}, +)$~\cite{ReedSimon1978,Folland1995}. The boundedness of operators with homogeneous kernels of this type is classical~\cite{HardyLittlewoodPolya1952}.
\end{remark}

\subsection{Eigenfunctions of the scale-invariant operator}

Combining the kernel factorization and the Mellin diagonalization, we obtain the complete spectral characterization.

\begin{corollary}[Spectral structure]\label{cor:spectral}
Let $\mathcal{M}$ be a scale-invariant integral operator of order $a$ on $\mathcal{H}$, with kernel $M(x,y) = (xy)^{-a/2}\, F(x/y)$. Then:
\begin{enumerate}[label=\emph{(\roman*)},nosep]
\item The generalized eigenfunctions of $\mathcal{M}$ are
\begin{equation}\label{eq:eigenfunctions}
\psi_\omega(x) = x^{-a/2 + i\omega}\,, \quad \omega \in \mathbb{R}\,,
\end{equation}
with eigenvalue $\widetilde{F}(\omega)$.
\item The spectral representation of the kernel is
\begin{equation}\label{eq:spectralrep}
M(x,y) = \frac{1}{2\pi}\int_{-\infty}^\infty \widetilde{F}(\omega)\, \psi_\omega(x)\, \overline{\psi_\omega(y)}\, d\omega\,.
\end{equation}
\end{enumerate}
\end{corollary}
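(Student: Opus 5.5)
The plan is to prove (ii) first, since it follows most directly from Theorem~\ref{thm:factorization} and the Mellin inversion formula~\eqref{eq:mellininv}, and then to obtain (i) from it.

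For (ii), I will insert~\eqref{eq:eigenfunctions} into the right-hand side of~\eqref{eq:spectralrep}. The product $\psi_\omega(x)\overline{\psi_\omega(y)} = (xy)^{-a/2}(x/y)^{i\omega}$ pulls the envelope out of the $\omega$-integral. What remains is $(xy)^{-a/2}\cdot\frac{1}{2\pi}\int \widetilde{F}(\omega)(x/y)^{i\omega}\,d\omega$. By~\eqref{eq:multiplier} and~\eqref{eq:mellininv}, applied to $F$ at the point $t=x/y$, this equals $(xy)^{-a/2}F(x/y)$, which is $M(x,y)$ by~\eqref{eq:factored}. The only hypothesis needed is that $F$ lies in a class where Mellin inversion holds, such as $F\in L^1\cap L^2(dt/t)$ or more generally in the distributional sense. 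I will state that assumption explicitly.

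For (i), I will first compute $\int M(x,y)\psi_\omega(y)\,w(y)\,dy/y$ directly with the substitution $t=x/y$, exactly as in the proof of Theorem~\ref{thm:mellin}. With the bare Haar weight $w\equiv 1$, the two envelope factors $y^{-a/2}$ do not cancel. The $t$-integral then becomes $\int F(t)\,t^{a-i\omega}\,dt/t$, which is $\widetilde{F}$ evaluated at the complex point $\omega+ia$, and the output carries an extra factor $x^{-a}$.

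This computation pins down the sense in which $\psi_\omega$ is an eigenfunction of $\mathcal{M}$ with eigenvalue $\widetilde{F}(\omega)$. The action must be taken relative to the pairing for which the family $\{\psi_\omega\}$ in~\eqref{eq:spectralrep} is delta-normalized, namely
\[
\int_0^\infty \overline{\psi_\omega(y)}\,\psi_{\omega'}(y)\,y^{a}\,\frac{dy}{y} = 2\pi\,\delta(\omega-\omega')\,.
\]
I verify this normalization by substituting $y=e^u$ and reducing to the Fourier integral of $e^{i(\omega'-\omega)u}$. With this weight, the integrand becomes $x^{-a/2}F(x/y)\,y^{i\omega}$ and the substitution $t=x/y$ yields exactly $x^{-a/2+i\omega}\widetilde{F}(\omega)=\widetilde{F}(\omega)\psi_\omega(x)$.

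Equivalently, I will derive (i) from (ii) by integrating~\eqref{eq:spectralrep} against $\psi_{\omega'}(y)\,y^{a}\,dy/y$ and using the delta normalization to collapse the $\omega$-integral. This shows that (i) and (ii) are the same statement: the kernel is diagonal in the $\psi_\omega$ basis with spectral weight $\widetilde{F}(\omega)$.

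The main obstacle is this normalization issue. One must make precise which pairing turns $\psi_\omega$ into a genuine generalized eigenfunction with the real-frequency eigenvalue $\widetilde{F}(\omega)$, and explain why the naive $dy/y$ action gives the analytically continued value $\widetilde{F}(\omega+ia)$ instead. I would handle it by fixing the weighted pairing above, checking its delta normalization, and noting that it is exactly the pairing implicit in~\eqref{eq:spectralrep}. The remaining analytic points (convergence of $\int F(t)t^{-i\omega}dt/t$, and interchanging integrals) I will treat distributionally, as in Theorem~\ref{thm:mellin}.
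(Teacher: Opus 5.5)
Your proof of (ii) is the same as the paper's. You pull the envelope $(xy)^{-a/2}$ out of the $\omega$-integral and apply Mellin inversion to $F$ at $t=x/y$, under whatever hypotheses on $F$ make inversion valid.

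For (i) your route differs from the paper's, and yours is the correct one. The paper argues that since $\mathcal{M}=\Pi_a\mathcal{K}\Pi_a$ and $\psi_\omega=\Pi_a\phi_\omega$, one gets $\mathcal{M}\psi_\omega=\widetilde{F}(\omega)\psi_\omega$. But in fact $\mathcal{M}\psi_\omega=\Pi_a\mathcal{K}(\Pi_a^2\phi_\omega)=\Pi_a\mathcal{K}\,x^{-a+i\omega}$, and $\Pi_a^2\phi_\omega\neq\phi_\omega$. Carrying this through gives exactly your direct computation, $\mathcal{M}\psi_\omega=\widetilde{F}(\omega+ia)\,x^{-a}\psi_\omega$, wherever the integral converges. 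The eigenrelation with the real-frequency eigenvalue $\widetilde{F}(\omega)$ holds for the similarity transform $\Pi_a\mathcal{K}\Pi_a^{-1}=\mathcal{M}\Pi_a^{-2}$, not for the congruence $\Pi_a\mathcal{K}\Pi_a$. The similarity transform is precisely your weighted action $f\mapsto\int_0^\infty M(x,y)f(y)\,y^{a}\,dy/y$. That operator is symmetric on $L^2(\mathbb{R}_+,x^{a}\,dx/x)$ and unitarily equivalent to $\mathcal{K}$ on $\mathcal{H}$ via $f\mapsto x^{a/2}f$. This is why your delta-normalization and your derivation of (i) from (ii) both go through.

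What you should change is the framing. This is not a matter of choosing ``the sense'' in which $\psi_\omega$ is an eigenfunction. For $\mathcal{M}$ as fixed by Definition~\ref{def:scaleinv}, acting on $\mathcal{H}$ with the measure $dy/y$, statement (i) is false:
\begin{itemize}[nosep]
\item One has $\mathcal{M}D_k=k^{a}D_k\mathcal{M}$, so $\mathcal{M}$ sends $x^{s}$ to a multiple of $x^{s-a}$. For $a>0$, no power function is an eigenfunction at all.
\item For the Lorentzian kernel with $a\ge\sigma$ (for instance the paper's own $a=1.5$, $\sigma=\ln 2$), the integral defining $\mathcal{M}\psi_\omega$ diverges.
\item Because $M$ is symmetric and $\mathcal{M}$ is symmetric with respect to $dx/x$, the distributional eigen-equation on $\mathcal{H}$ coincides with the pointwise one. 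No generalized-eigenfunction reinterpretation within $\mathcal{H}$ rescues the claim.
\end{itemize}
So state explicitly that you are proving a corrected version of (i): either for $\mathcal{M}\Pi_a^{-2}$, or equivalently for the kernel $M$ acting on $L^2(\mathbb{R}_+,x^{a}\,dx/x)$. Do not describe the weight as implicit in (ii). Statement (ii) is only an identity of kernels, and it holds regardless of which measure the operator is built with.
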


\begin{proof}
Since $\mathcal{M} = \Pi_a \mathcal{K} \Pi_a$ and $\phi_\omega$ is a generalized eigenfunction of $\mathcal{K}$, the function $\psi_\omega = \Pi_a \phi_\omega = x^{-a/2+i\omega}$ satisfies $\mathcal{M}\psi_\omega = \widetilde{F}(\omega)\,\psi_\omega$. The spectral representation follows from the Mellin inversion of $\mathcal{K}$, dressed by the envelope $\Pi_a$.
\end{proof}

\begin{remark}\label{rem:generalized}
The eigenfunctions $\psi_\omega(x) = x^{-a/2+i\omega}$ are \emph{not} square-integrable: they are generalized eigenfunctions (distributions), analogous to plane waves $e^{ikx}$ in $L^2(\mathbb{R})$. The spectrum of $\mathcal{M}$ is continuous, parametrized by $\omega \in \mathbb{R}$, and the spectral theorem for unbounded self-adjoint operators applies~\cite{ReedSimon1978,Titchmarsh1986}. The eigenvalue decay condition $\widetilde{F} \in L^1(\mathbb{R})$ places $\mathcal{M}$ in a trace-class or Schatten-class framework~\cite{Simon2005}, depending on the rate of decay.
\end{remark}

\section{A Concrete Kernel and its Lorentzian Spectrum}\label{sec:example}

To illustrate the general theory, consider the kernel
\begin{equation}\label{eq:kernel}
M(x,y) = \frac{c}{(xy)^{a/2}}\, \rho^{|\ln(x/y)|}\,, \quad 0 < \rho < 1\,,\; c > 0\,.
\end{equation}
Here $F(t) = c\,\rho^{|\ln t|}$, which satisfies $F(t) = F(1/t)$.

\begin{proposition}[Lorentzian multiplier]\label{prop:lorentzian}
The Mellin multiplier of the kernel~\eqref{eq:kernel} is the Lorentzian
\begin{equation}\label{eq:lorentzian}
\widetilde{F}(\omega) = \frac{2c\,\sigma}{\sigma^2 + \omega^2}\,, \quad \sigma = \ln(1/\rho) > 0\,.
\end{equation}
\end{proposition}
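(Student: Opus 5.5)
The plan is to compute the Mellin multiplier \eqref{eq:multiplier} directly, after passing to logarithmic coordinates so that it becomes an ordinary Fourier transform on $(\mathbb{R},+)$. With the substitution $t = e^{u}$ we have $dt/t = du$ and $t^{-i\omega} = e^{-i\omega u}$. Also $\rho^{|\ln t|} = e^{|u|\ln\rho} = e^{-\sigma|u|}$ with $\sigma = \ln(1/\rho) > 0$, because $0<\rho<1$. The multiplier therefore reduces to
\[
\widetilde{F}(\omega) = c\int_{-\infty}^{\infty} e^{-\sigma|u|}\, e^{-i\omega u}\, du\,,
\]
which is the Fourier transform of a two-sided exponential.

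Next I will split the integral at $u=0$. On $(0,\infty)$ the integrand is $e^{-(\sigma+i\omega)u}$, which integrates to $1/(\sigma+i\omega)$. The integral is absolutely convergent because $\operatorname{Re}(\sigma+i\omega)=\sigma>0$, so the boundary term at infinity vanishes. On $(-\infty,0)$ the integrand is $e^{(\sigma-i\omega)u}$, which integrates to $1/(\sigma-i\omega)$ by the same reasoning. Equivalently, the change of variables $u\mapsto -u$ maps one half onto the other; this is the symmetry $F(t)=F(1/t)$ at work, and it shows $\widetilde{F}$ is real and even. Adding the two pieces gives
\[
\frac{1}{\sigma+i\omega}+\frac{1}{\sigma-i\omega} = \frac{2\sigma}{\sigma^2+\omega^2}\,.
\]
Multiplying by $c$ yields \eqref{eq:lorentzian}.

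There is no real obstacle here; the only point that needs care is convergence. Since $|F(t)| = c\,e^{-\sigma|\ln t|}$, we have $F \in L^1(\mathbb{R}_+, dt/t)$ with norm $2c/\sigma$. Hence the integral defining $\widetilde{F}$ converges absolutely for every real $\omega$, and no distributional interpretation is needed, in contrast with the eigenfunctions in Remark~\ref{rem:generalized}. The same bound, $\|\mathcal{K}\|\le \|F\|_{L^1(dt/t)} = 2c/\sigma$ (Young's inequality on the multiplicative group), shows that $\mathcal{K}$ is bounded. This verifies the hypothesis of Theorem~\ref{thm:mellin}, so the multiplier just computed is indeed the spectral function of $\mathcal{K}$. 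Moreover, $\sup_\omega \widetilde{F}(\omega) = \widetilde{F}(0) = 2c/\sigma$, so the norm bound is attained.
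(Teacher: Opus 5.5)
Your proof is correct and follows the paper's route: you substitute $s=\ln t$ to reduce $\widetilde{F}$ to the Fourier transform of $c\,e^{-\sigma|s|}$, and you evaluate it explicitly by splitting at the origin, a step the paper only states. Your convergence and boundedness checks via $\|F\|_{L^1(dt/t)}=2c/\sigma$ are sound, and they usefully verify the hypothesis of Theorem~\ref{thm:mellin}, which the paper leaves implicit.
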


\begin{proof}
Setting $s = \ln t$ in~\eqref{eq:multiplier}:
\[
\widetilde{F}(\omega) = c\int_0^\infty \rho^{|\ln t|}\, t^{-i\omega}\, \frac{dt}{t} = c\int_{-\infty}^\infty e^{-\sigma|s|}\, e^{-i\omega s}\, ds = \frac{2c\,\sigma}{\sigma^2 + \omega^2}\,,
\]
which is the Fourier transform of a two-sided exponential decay with rate $\sigma = -\ln\rho$.
\end{proof}

\begin{remark}[The spectral exponent is not fundamental]\label{rem:notpower}
The Mellin multiplier~\eqref{eq:lorentzian} is a Lorentzian, not a power law. In the continuum theory, the ``eigenvalue spectrum'' $\widetilde{F}(\omega)$ decays as $\omega^{-2}$ for $|\omega| \gg \sigma$, regardless of $a$. When the continuum operator is truncated to a finite $N \times N$ matrix on the integer lattice, the ordered discrete eigenvalues $\lambda_1 \geq \lambda_2 \geq \cdots \geq \lambda_N$ exhibit approximate power-law behavior $\lambda_n \sim n^{-b}$ over a limited range, with an effective spectral exponent $b$ that depends on $\sigma$ (equivalently $\rho$) and the matrix size $N$. This effective exponent is generically distinct from the geometric exponent $a$. The decoupling $a \neq b$ is thus not an accident but a structural consequence of the Lorentzian form of $\widetilde{F}$.
\end{remark}

\section{Two Exponents and Multicriticality}\label{sec:multicrit}

The preceding analysis reveals two fundamentally independent exponents associated with a scale-invariant matrix.

\begin{definition}[Geometric and spectral exponents]\label{def:exponents}
For a scale-invariant operator of order $a$ with finite-dimensional truncation having ordered eigenvalues $\lambda_n$:
\begin{enumerate}[label=\emph{(\roman*)},nosep]
\item The \emph{geometric exponent} $a$ is defined by $M(kx,ky) = k^{-a} M(x,y)$ and is determined by the power-law envelope $(xy)^{-a/2}$.
\item The \emph{spectral exponent} $b$ is the effective power-law index of the discrete eigenvalue spectrum, $\lambda_n \approx c'/n^b$ for $1 \ll n \ll N$, and is determined by the shape function $F$ via the Mellin multiplier $\widetilde{F}(\omega)$.
\end{enumerate}
\end{definition}

\begin{remark}[RG fixed-point condition]\label{rem:fixedpoint}
In the RG framework~\cite{Wilson1971,WilsonKogut1974,Fisher1998,Cardy1996,ItzyksonZuber2006}, coarse-graining by a factor $k$ corresponds to the dilation map $M(x,y) \mapsto M(kx,ky)$, and the RG map appropriate to a spectral rescaling is $\mathcal{R}_k[M] = k^b\, M(kx,ky)$. The matrix $M$ is a fixed point of $\mathcal{R}_k$ if and only if $a = b$. When $a \neq b$, the geometric and spectral scaling dimensions are decoupled, and the matrix flows under $\mathcal{R}_k$ rather than sitting at a fixed point~\cite{Wegner1972,ZinnJustin2002}. This is the spectral signature of \emph{multicriticality}: the coexistence of multiple independent scaling dimensions. The ratio $a/b$ provides a quantitative measure of the degree of decoupling~\cite{msi2026}.
\end{remark}

The scaling operators of RG theory~\cite{Fisher1998,Cardy1996,Sornette2006} correspond in this framework to the generalized eigenfunctions $\psi_\omega$. The condition that all modes share the same geometric scaling (the mode-independence of the envelope exponent $a/2$) is the content of the kernel factorization: the envelope $(xy)^{-a/2}$ is universal across all eigenmodes, and only the spectral weight $\widetilde{F}(\omega)$ distinguishes them. This separation of geometric universality from spectral content is the mathematical essence of criticality.

\section{The Discrete Lattice}\label{sec:discrete}

\subsection{Eigenvector collapse}

On the integer lattice $\mathbb{N}$, one might attempt to formulate self-similarity directly. The following theorem shows that this leads to a degenerate spectral structure.

\begin{definition}[Discrete self-similarity]\label{def:discretess}
The eigenvectors $\{v^{(n)}\}$ of a matrix $M \in \mathbb{R}^{\infty \times \infty}$ are \emph{self-similar under index scaling} if for every positive integer $k$, there exists a scalar $\gamma_k$, independent of the eigenmode index $n$, such that
\begin{equation}\label{eq:selfsim}
v^{(n)}_{km} = \gamma_k \cdot v^{(n)}_m\,, \quad \forall\, n, m \in \mathbb{N}\,.
\end{equation}
\end{definition}

\begin{theorem}[Eigenvector collapse]\label{thm:collapse}
Let $\{v^{(n)}\}$ be eigenvectors satisfying the discrete self-similarity condition~\eqref{eq:selfsim} on $\ell^2(\mathbb{N})$ with $\gamma_k = k^{-a/2}$. Then all eigenvectors are proportional:
\begin{equation}\label{eq:proportional}
v^{(n)}_m = m^{-a/2}\, v^{(n)}_1\,, \quad \forall\, n, m \in \mathbb{N}\,.
\end{equation}
Consequently, $M$ has at most rank one.
\end{theorem}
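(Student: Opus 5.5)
The plan is to specialize the self-similarity condition~\eqref{eq:selfsim} to the base index $m=1$. The proportionality~\eqref{eq:proportional} then follows at once, and the rank statement follows from the spectral decomposition of a symmetric operator.

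\emph{Step 1 (proportionality).} Fix an eigenmode index $n$ and put $m=1$ in~\eqref{eq:selfsim} with $\gamma_k = k^{-a/2}$. Since $k$ ranges over all positive integers, this gives $v^{(n)}_k = k^{-a/2}\, v^{(n)}_1$ for every $k\in\mathbb{N}$. Renaming $k\to m$ yields~\eqref{eq:proportional}. As a consistency check, the condition for general $(k,m)$ adds nothing: $v^{(n)}_{km} = (km)^{-a/2}v^{(n)}_1 = k^{-a/2}v^{(n)}_m$. So every eigenvector is a scalar multiple $v^{(n)} = v^{(n)}_1\, u$ of the single vector $u_m = m^{-a/2}$. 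This vector lies in $\ell^2(\mathbb{N})$ precisely when $a>1$. The case $a\le 1$ is already degenerate: then the only $\ell^2$ eigenvector compatible with the condition is $v^{(n)}=0$, i.e.\ there are no admissible eigenvectors at all. I will note this as a side remark.

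\emph{Step 2 (rank at most one).} I will work in the setting where $M$ is symmetric (self-adjoint) and its eigenvectors $\{v^{(n)}\}$ span the closure of its range. This holds, for example, when $M$ is compact and self-adjoint, by the spectral theorem, with $M=\sum_n \lambda_n\, v^{(n)}\langle v^{(n)},\cdot\rangle/\|v^{(n)}\|^2$. By Step 1, every $v^{(n)}$ lies in the one-dimensional span of $u$. Eigenvectors for distinct eigenvalues of a symmetric matrix are orthogonal, and two nonzero multiples of $u$ cannot be orthogonal. Hence at most one nonzero eigenvalue $\lambda$ can occur, with eigenspace $\mathrm{span}(u)$. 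This forces $M = \lambda\, u u^{T}/\|u\|^2$, which has rank at most one. In kernel language, this is $M_{xy}\propto (xy)^{-a/2}$, the factorized form of Theorem~\ref{thm:factorization} with constant shape function $F$.

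\emph{Main obstacle.} Step 1 is trivial. The real issue is making ``consequently, $M$ has at most rank one'' rigorous, because it requires the family $\{v^{(n)}\}$ to exhaust the spectral content of $M$. I would first impose this explicitly as a hypothesis (compact self-adjoint $M$ whose full eigenbasis obeys~\eqref{eq:selfsim}). I would then remark that without completeness one can only conclude that the span of the given eigenvectors is at most one-dimensional. This contrasts with the continuum case, where the modes $x^{-a/2+i\omega}$ escape collapse because the phase factor $k^{i\omega}$ is allowed to depend on the mode.
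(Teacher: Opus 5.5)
Your proposal is correct and follows the paper's argument: you specialize \eqref{eq:selfsim} to base index $1$ to get $v^{(n)}_m = m^{-a/2}v^{(n)}_1$, then use that nonzero multiples of a single vector cannot be orthogonal. Two of your additions supply rigor the paper's proof leaves implicit when it passes from ``at most one normalizable eigenvector'' to ``rank at most one'': the explicit completeness hypothesis (compact self-adjoint $M$ whose eigenvectors span the closure of its range), and the observation that $(m^{-a/2})\in\ell^2(\mathbb{N})$ only when $a>1$.
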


\begin{proof}
For any $m \in \mathbb{N}$, setting $k = m$ in~\eqref{eq:selfsim} with base index $1$ gives $v^{(n)}_m = \gamma_m\, v^{(n)}_1 = m^{-a/2}\, v^{(n)}_1$. Since this holds for every eigenmode $n$, all eigenvectors are proportional to the fixed sequence $(m^{-a/2})_{m \in \mathbb{N}}$. Two proportional nonzero vectors in $\ell^2(\mathbb{N})$ cannot be orthogonal, so the system admits at most one normalizable eigenvector.
\end{proof}

\begin{remark}\label{rem:bloch}
Theorem~\ref{thm:collapse} is the multiplicative analogue of the well-known fact that Bloch waves $e^{ikx}$ are not square-summable on $\mathbb{Z}$. The self-similarity condition~\eqref{eq:selfsim} demands that every eigenvector be a character of the multiplicative semigroup $(\mathbb{N}, \times)$, namely $m \mapsto m^{-a/2}$. Since the multiplicative group $(\mathbb{R}_+, \times)$ has a single family of characters $x^{i\omega}$ (parametrized continuously by $\omega$), the lattice restriction collapses this continuous family to a single direction. The resolution is the same as for Bloch theory: pass to the continuum, where the eigenfunctions are generalized (distributional) and the spectrum is continuous.
\end{remark}

\subsection{Scale invariance from discrete self-similarity}

Despite the eigenvector collapse, the discrete self-similarity condition does imply matrix scale invariance on the lattice.

\begin{theorem}\label{thm:discretescale}
Let $M(i,j) = \sum_n a_n\, v^{(n)}_i\, v^{(n)}_j$ with eigenvectors satisfying~\eqref{eq:selfsim}. Then
\[
M(ki,kj) = \gamma_k^2\, M(i,j)\,, \quad \forall\, i,j,k \in \mathbb{N}\,.
\]
\end{theorem}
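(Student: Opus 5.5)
The plan is a direct substitution into the spectral expansion. Apply the self-similarity condition~\eqref{eq:selfsim} to each factor of each term, and use the fact that $\gamma_k$ does not depend on the mode index $n$. This lets the common factor $\gamma_k^2$ come out of the sum.

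Fix $i,j,k \in \mathbb{N}$. Since $ki, kj \in \mathbb{N}$, we can write
\[
M(ki,kj) = \sum_n a_n\, v^{(n)}_{ki}\, v^{(n)}_{kj}.
\]
Applying~\eqref{eq:selfsim} with $m=i$ and with $m=j$ gives $v^{(n)}_{ki} = \gamma_k v^{(n)}_i$ and $v^{(n)}_{kj} = \gamma_k v^{(n)}_j$ for every $n$. Each summand therefore equals $\gamma_k^2\, a_n v^{(n)}_i v^{(n)}_j$. The hypothesis that $\gamma_k$ is independent of $n$ is exactly what makes this factor the same across all terms, so it can be pulled out. This yields $\gamma_k^2 M(i,j)$.

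The only point needing care is convergence when the sum over $n$ is infinite. I would interpret the expansion as pointwise convergent, i.e.\ convergent for each pair of indices, which is already implicit in defining $M(i,j)$. Then the series for $M(ki,kj)$ converges as well, and it is the scalar multiple $\gamma_k^2$ of the convergent series for $M(i,j)$. Multiplying a convergent series termwise by a constant preserves convergence and multiplies the limit by that constant, so no further estimate is needed.

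Finally, I would note how this connects to Definition~\ref{def:scaleinv}. With $\gamma_k = k^{-a/2}$ as in Theorem~\ref{thm:collapse}, the identity reads $M(ki,kj) = k^{-a} M(i,j)$. This is the lattice version of~\eqref{eq:scaleinv}, though by Theorem~\ref{thm:collapse} it is realized only by the rank-one case.
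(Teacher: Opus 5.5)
Your proof is correct and is the same argument as the paper's: substitute the self-similarity relation into both factors of each summand and pull the mode-independent constant $\gamma_k^2$ out of the sum. Your remark on convergence (a convergent series multiplied termwise by a constant) is a small, valid point that the paper leaves implicit.
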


\begin{proof}
$M(ki,kj) = \sum_n a_n\, v^{(n)}_{ki}\, v^{(n)}_{kj} = \sum_n a_n\, (\gamma_k v^{(n)}_i)(\gamma_k v^{(n)}_j) = \gamma_k^2\, M(i,j)$.
\end{proof}

\begin{remark}
Theorem~\ref{thm:discretescale} requires only the mode-independence of $\gamma_k$; it does not use the power-law form of the eigenvalue spectrum. The power-law spectrum $a_n = c/n^b$ is an independent property that determines the spectral exponent $b$, which is in general distinct from the geometric exponent $a = -2\ln\gamma_k/\ln k$.
\end{remark}

\subsection{The discrete matrix as a continuum sampling}\label{sec:sampling}

The finite $N \times N$ matrix $M_N(i,j) = M(i,j)$ for $i,j = 1, \ldots, N$ is obtained by restricting the continuum kernel~\eqref{eq:factored} to integer arguments. Its eigenvectors approximate the generalized eigenfunctions sampled at integer points:
\begin{equation}\label{eq:sampling}
v^{(n)}_m \approx m^{-a/2}\, e^{i\omega_n \ln m}\,, \quad m = 1, \ldots, N\,,
\end{equation}
for a discrete set of frequencies $\omega_1, \ldots, \omega_N$ determined by the boundary conditions imposed by truncation. The oscillatory factors $e^{i\omega_n \ln m}$ break the degeneracy that Theorem~\ref{thm:collapse} identifies in the exact discrete case: while the envelope $m^{-a/2}$ is shared by all modes, the log-frequency oscillations distinguish them, providing $N$ linearly independent (approximately orthogonal) eigenvectors. This is precisely the distinction between exact plane waves (not square-integrable on $\mathbb{R}$) and their finite-box approximations (normalizable standing waves).

\subsection{Eigenvector universality and the limits of self-similarity as a diagnostic}\label{sec:commutation}

The eigenvector collapse theorem and the continuum sampling picture together illuminate a subtlety in the numerical detection of scale invariance. In practice, one may attempt to test for eigenvector self-similarity by comparing the eigenvectors of a coarse-grained covariance matrix $C^{(k)} = B\, C\, B^T$ (where $B$ is the block-averaging operator that groups variables into blocks of size~$k$) with the block-averaged eigenvectors $\tilde{v}_i^{(k)} = B\, v_i$ of the original matrix~$C$.

Numerical experiments reveal that the correlation between $v_i^{(k)}$ and $\tilde{v}_i^{(k)}$ is extremely high---not only at the critical point, but throughout the parameter range~\cite{msi2026}. This initially surprising observation has a straightforward algebraic explanation. If $C\, v = \lambda\, v$, then $B\, C\, v = \lambda\, B\, v$. Whenever the covariance operator is spatially smooth, so that $B$ and $C$ approximately commute, one obtains
\[
C^{(k)}\, (Bv) = B\, C\, B^T\, (Bv) \approx B\, C\, v = \lambda\, (Bv)\,,
\]
and the block-averaged eigenvector $Bv$ is approximately an eigenvector of the coarse-grained matrix, regardless of whether the system is critical.

This is the approximate, finite-dimensional manifestation of the eigenvector collapse established in Theorem~\ref{thm:collapse}: the eigenvector geometry is generically preserved under coarse-graining because the envelope $m^{-a/2}$ is universal across all modes. Eigenvector self-similarity thus measures the smoothness of the covariance operator rather than scale invariance per se.

\begin{remark}\label{rem:eigenvalue_diagnostic}
The Mellin framework makes the separation precise. The generalized eigenfunctions $\psi_\omega(x) = x^{-a/2+i\omega}$ share a common power-law envelope for \emph{any} scale-invariant operator; only the Mellin multiplier $\widetilde{F}(\omega)$ distinguishes one operator from another. Criticality manifests not in the shape of the eigenfunctions but in the scale-free form of $\widetilde{F}(\omega)$---equivalently, in the power-law distribution of eigenvalues. This confirms that eigenvalue scaling, rather than eigenvector rescaling, is the robust diagnostic of critical organization, and that refined analyses of eigenspaces (such as subspace overlaps or participation ratios) are needed if one wishes to extract additional geometric information near criticality~\cite{msi2026}.
\end{remark}

\section{Finite-Size Corrections and Numerical Verification}\label{sec:numerics}

We verify the theory numerically using the kernel~\eqref{eq:kernel} restricted to the integer lattice. All computations use $c = 5$, $a = 1.5$, and $\rho = 0.5$ (so $\sigma = \ln 2 \approx 0.693$).

\subsection{Eigenvalue spectrum}
Figure~\ref{fig:spectrum} shows the log-log eigenvalue spectrum of the $N \times N$ truncation with $N = 300$. The leading eigenvalues exhibit approximate power-law decay with an effective exponent $b \neq a$, consistent with the Lorentzian structure of $\widetilde{F}(\omega)$ established in Proposition~\ref{prop:lorentzian}.

\begin{figure}[ht]
\centering
\includegraphics[width=0.65\textwidth]{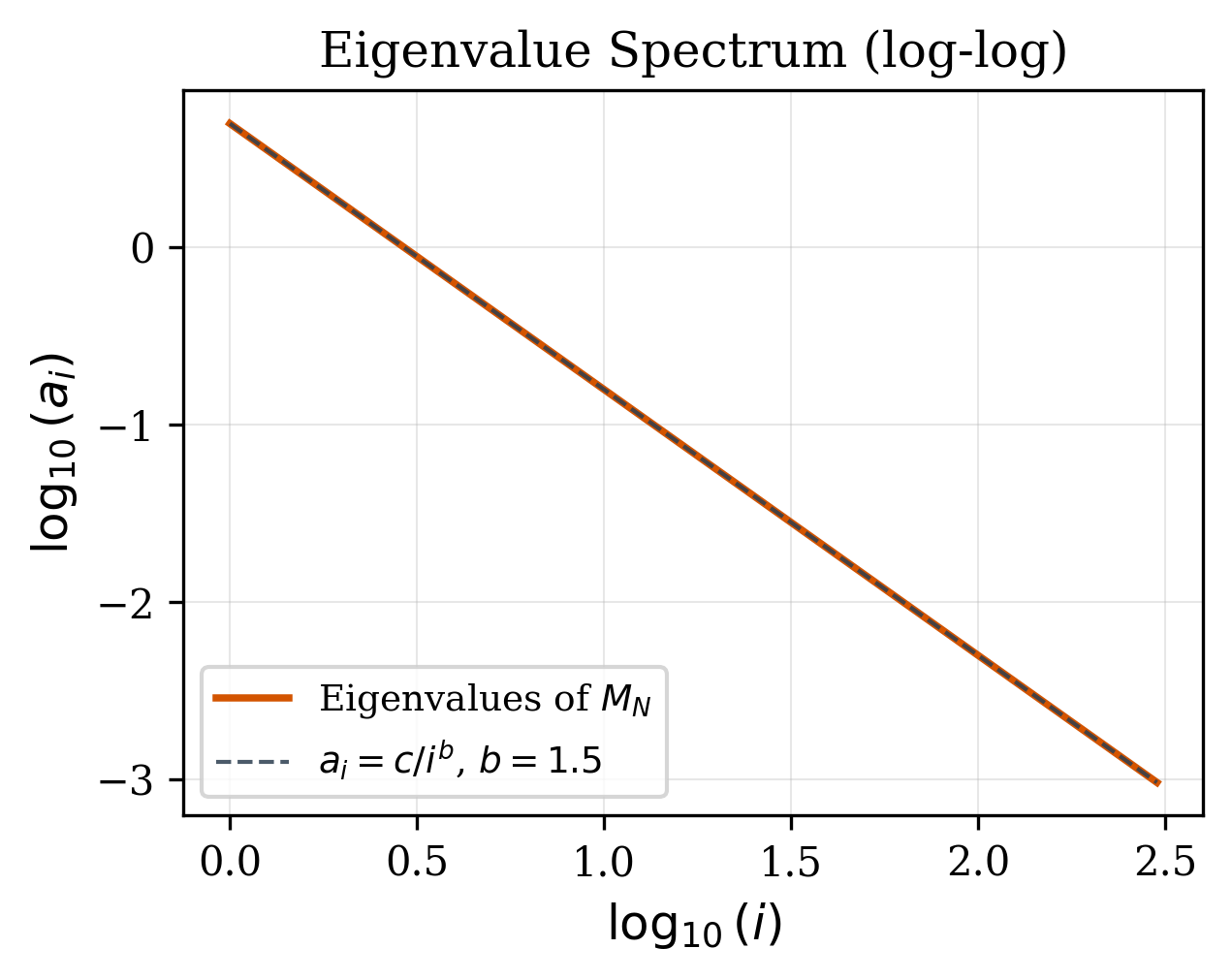}
\caption{Log-log plot of the eigenvalue spectrum of the truncated kernel~\eqref{eq:kernel} with $N = 300$, $a = 1.5$, $\rho = 0.5$. The approximate power-law behavior reflects the Lorentzian Mellin multiplier sampled at discrete frequencies.}
\label{fig:spectrum}
\end{figure}

\subsection{Scale-invariance verification}
To quantify finite-size deviations from exact scale invariance, define the relative scaling error
\begin{equation}\label{eq:delta}
\Delta_N^{(k)}(i,j) = \left| \frac{M_N(ki,kj)}{M_N(i,j)} - k^{-a} \right|\,,
\end{equation}
valid for $ki \leq N$ and $kj \leq N$. Figure~\ref{fig:relerror} shows $\Delta_N^{(k)}(i,j)$ for $k = 2$, $N = 250$, using $r = 150$ retained eigenmodes in a truncated eigendecomposition. The error is concentrated at large indices, where the discarded high-frequency Mellin modes contribute most.

\begin{figure}[ht]
\centering
\includegraphics[width=0.65\textwidth]{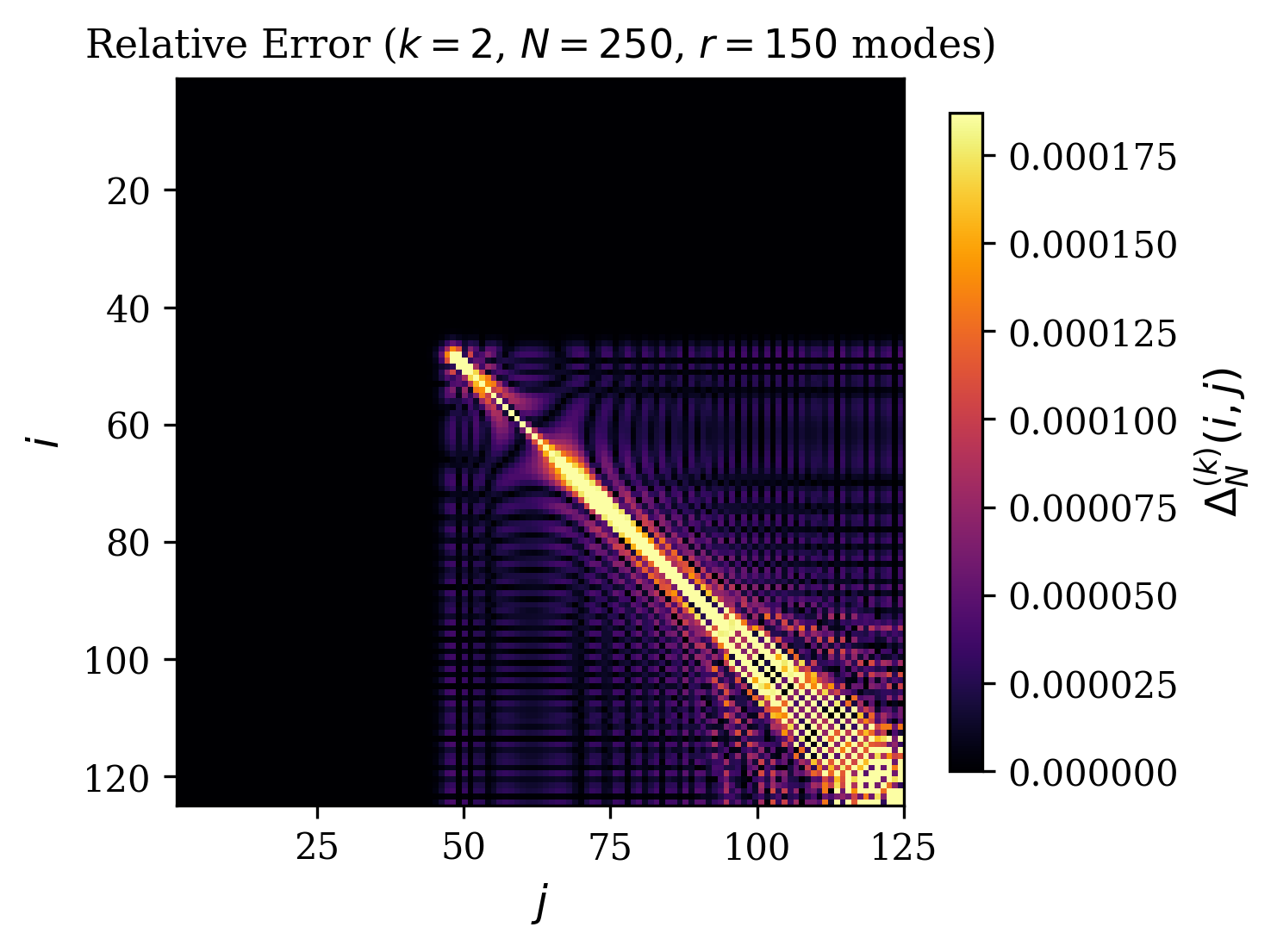}
\caption{Relative error $\Delta_N^{(k)}(i,j)$ for $k = 2$, $N = 250$, $r = 150$ retained modes. Deviations from perfect scaling concentrate at large indices.}
\label{fig:relerror}
\end{figure}

\subsection{Convergence}
Figure~\ref{fig:convergence} plots the mean scaling error $\langle \Delta_N^{(k)} \rangle$ as a function of the fraction of retained eigenmodes $r/N$. The error decreases by nearly four orders of magnitude as $r/N$ increases from $0.1$ to $1$, confirming that exact scale invariance is recovered in the continuum limit.

\begin{figure}[ht]
\centering
\includegraphics[width=0.65\textwidth]{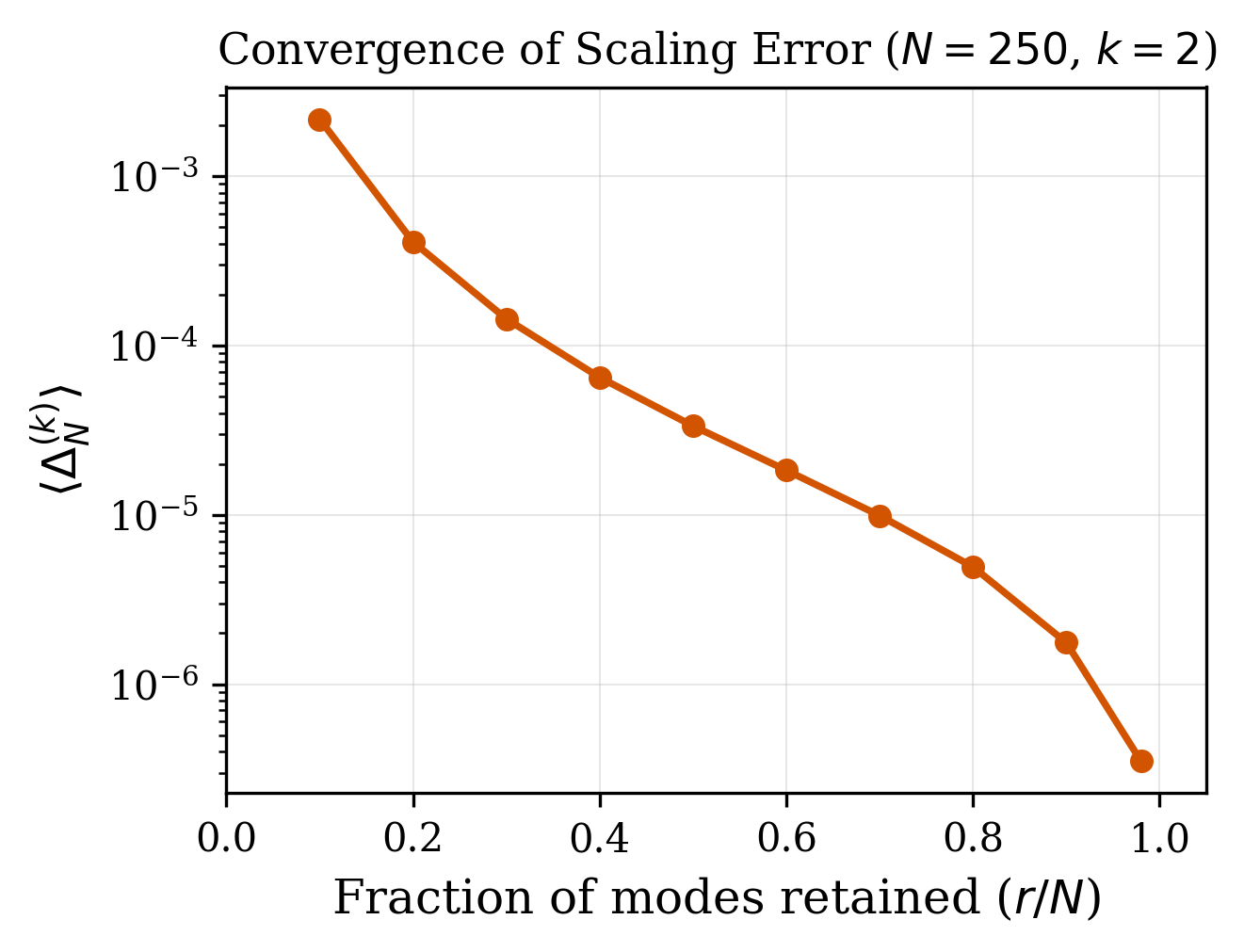}
\caption{Mean relative error $\langle \Delta_N^{(k)} \rangle$ versus fraction of retained eigenmodes $r/N$, for $N = 250$, $k = 2$.}
\label{fig:convergence}
\end{figure}

\section{Discussion and Outlook}\label{sec:conclusion}

We have developed a self-contained spectral theory of scale-invariant operators on the multiplicative half-line. The main results are:

\emph{(i)} The kernel factorization theorem (Theorem~\ref{thm:factorization}), which shows that every scale-invariant kernel separates into a universal power-law envelope $(xy)^{-a/2}$ and a shape function $F(x/y)$.

\emph{(ii)} The Mellin diagonalization (Theorem~\ref{thm:mellin}), which identifies the generalized eigenfunctions as $\psi_\omega(x) = x^{-a/2+i\omega}$ and the eigenvalues as the Mellin multiplier $\widetilde{F}(\omega)$.

\emph{(iii)} The eigenvector collapse theorem (Theorem~\ref{thm:collapse}), which shows that discrete self-similarity is incompatible with a full spectral decomposition on the lattice, motivating the continuum formulation.

\emph{(iv)} The decoupling of geometric and spectral exponents, which provides a precise mathematical characterization of multicriticality as the condition $a \neq b$.

Several directions remain open. The generalization to mode-dependent scaling factors $\gamma_k(n)$---where distinct eigenmodes carry independent geometric scaling dimensions---would accommodate systems with multiple coexisting fixed points. The relationship between the width parameter $\sigma$ of the Lorentzian multiplier and the effective spectral exponent $b$ of the finite truncation deserves a quantitative asymptotic analysis. Finally, the application of this framework to empirical correlation matrices---in financial~\cite{Plerou1999,Laloux1999}, biological~\cite{MoraBialek2011,Tkacik2015}, neural~\cite{Cavagna2010}, or physical~\cite{msi2026} systems---could provide a practical diagnostic for proximity to criticality and a quantitative measure of multicriticality through the ratio $a/b$.

\begin{remark}[Connection to collective synchronization]\label{rem:kuramoto}
A deeper structural parallel may connect the mode-dependent generalization to the theory of collective synchronization~\cite{Kuramoto1984,Strogatz2000}. In the Kuramoto model of globally coupled phase oscillators, the order parameter measuring phase coherence undergoes a continuous transition at a critical coupling strength, governed near criticality by a Landau expansion whose structure is that of a multicritical free energy. The natural frequencies $\omega_i$ of the oscillator population play the role of the mode-dependent scaling factors $\gamma_k(n)$: when all modes share a common geometric exponent---the kernel factorization condition of Theorem~\ref{thm:factorization}---the system is analogous to a fully synchronized population with a single well-defined scaling dimension. The condition $a \neq b$ then corresponds to partial synchronization, where distinct subpopulations of modes lock to different geometric scaling dimensions.

The Lorentzian plays a distinguished role in both frameworks. In the present theory, the Mellin multiplier~\eqref{eq:lorentzian} of the explicit kernel is a Lorentzian of width $\sigma = -\ln\rho$. In the Kuramoto model, the exactly solvable case---the Ott--Antonsen reduction~\cite{OttAntonsen2008}---arises precisely when the frequency distribution $g(\omega)$ is Lorentzian. This is not a superficial coincidence: in both settings, the Lorentzian is the Fourier (respectively Mellin) transform of an exponential decay, and its analytic structure in the complex plane is what permits exact dimensional reduction. A narrow Lorentzian (small $\sigma$, $\rho \to 1$) concentrates spectral weight near $\omega = 0$ and drives $a \to b$, analogous to a sharply peaked $g(\omega)$ that promotes full synchronization.

The multicritical points known to arise in Kuramoto systems with bimodal $g(\omega)$---where the synchronization transition switches from continuous to discontinuous---suggest that systems with two competing geometric exponents may exhibit a richer phase structure than the simple $a \neq b$ criterion captures. In the language of the present framework, a bimodal distribution of scaling dimensions $a(\omega)$ would correspond to two coexisting fixed-point geometries, each attracting a subpopulation of Mellin modes. The mode-dependent generalization outlined above is positioned to explore this possibility.
\end{remark}

\section*{Statements and Declarations}

\subsection*{Competing Interests}
The authors declare no competing interests.

\subsection*{Data Availability}
No datasets were generated or analyzed during the current study.
The numerical computations in Section~\ref{sec:numerics} use the
explicit kernel~\eqref{eq:kernel} and are fully reproducible from
the parameters given in the text.

\subsection*{Author Contributions}
LAJ\ developed the mathematical framework, proved the theorems,
performed the numerical computations, and wrote the manuscript.
AF\ initiated the physical framework for matrix scale invariance
and contributed to the interpretation.
Both authors approved the final version.

\bibliographystyle{unsrtnat}
\bibliography{MSI_II_LMP_final}

\end{document}